\newtheorem{thm}{Theorem}[section]
\newtheorem{cor}[thm]{Corollary}
\newtheorem{lem}[thm]{Lemma}
\theoremstyle{definition}
\theoremstyle{remark}
\newtheorem{rem}[thm]{Remark}
\numberwithin{equation}{section}
\begin{document}
\begin{center}
{\bf Complete systems of partial derivatives of entire functions and frequently hypercyclic operators\footnote{This research is partially supported by the Russian Foundation for Basic Research (projects 14-01-00720 and 14-01-97037)}}
\vskip 0.1cm
{\bf Vitaly E. Kim} \\ {\small Russian Academy of Sciences, Institute of mathematics with computing center, 
\\ 112 Chernyshevsky str., Ufa, Russia, 450008
\\ e-mail: kim@matem.anrb.ru}
\end{center}
\begin{abstract}
We find some sufficient conditions for a system of partial derivatives of an entire function to be complete in the space $H(\mathbb{C}^d)$ of all entire functions of $d$ variables. As an application of this result we describe new classes of frequently hypercyclic operators on $H(\mathbb{C}^d)$.
\end{abstract}
\section{Introduction}\label{sec1}
In this paper we consider the space $H(\mathbb{C}^d)$ of all entire functions on $\mathbb{C}^d$ with the topology of uniform convergence on compact subsets. As usual, $\mathbb{N}_{0}$ is the set of non-negative integers. For $z=(z_1,\cdots,z_d),w=(w_1,\cdots,w_d)\in\mathbb{C}^d$ and $n=(n_1,\cdots,n_d)\in\mathbb{N}_{0}^d$ we shall use the following notation: $z^n=z_1^{n_1}z_2^{n_2}\cdots z_d^{n_d}$, $n!=n_1!n_2!\cdots n_d!$, $\|n\|=n_1+\cdots +n_d$, $D^n=\frac{\partial^{\|n\|}}{\partial z_1^{n_1}\cdots\partial z_d^{n_d}}$, $\left\langle w,z\right\rangle=w_1z_1+\dots+w_dz_d$, $|z|=\max\{|z_j|,\;1\leq j\leq d\}$.
\par
Recall that a continuous linear operator $T$ on a topological vector space $X$ is called hypercyclic if there exists an element $x\in X$ such that its orbit $\mathrm{Orb}(T,x)=\{T^nx,\;n=0,1,\cdots\}$ is dense in $X$. Recently, F. Bayart and S. Grivaux \cite{21} have introduced a stronger notion of hypercyclicity, namely the concept of frequently hypercyclic operators. Recall that the lower density of the set $A\subset\mathbb{N}_{0}$ is defined as
$$\underline{\mathrm{dens}}(A)=\liminf_{N\rightarrow\infty}\frac{\#\{n\in A:n\leq N\}}{N},$$
where $\#$ denotes the cardinality of a set.
\par\noindent
{\bf Definition.} A continuous linear operator $T$ on a topological vector space $X$ is called frequently hypercyclic if there exists an element $x\in X$ such that, for any non-empty open subset $U$ of $X$,
$$\underline{\mathrm{dens}}\{n\in\mathbb{N}_{0}:T^nx\in U\}>0.$$
\par
We refer to \cite{13}, \cite{20} for additional information on hypercyclic and frequently hypercyclic operators.
\par
In the present paper we prove the following result.
\begin{thm}\label{th1}
Let $T_j:H(\mathbb{C}^d)\rightarrow H(\mathbb{C}^d)$, $j=1,\cdots,d$ be continuous linear operators such that
\begin{enumerate}
		\item $\bigcap_{j=1}^d\mathrm{ker}\,T_j\neq\{0\}$;
		\item $T_j$ satisfy the commutation relations:
\begin{equation}\label{f1}
	[T_j,\frac{\mathrm{\partial}}{\mathrm{\partial}z_k}]=\delta_{jk}a_jI;\,j,k=1,\cdots,d,
\end{equation}
where $\delta_{jk}$ is Kronecker's delta, $a_j\in\mathbb{C}\setminus\{0\}$ are some constants and $I$ is identity operator.
\end{enumerate}
Then each of the operators $T_j$, $j=1,\cdots,d$ is frequently hypercyclic.
\end{thm}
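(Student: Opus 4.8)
Fix $j\in\{1,\dots,d\}$; by the symmetry of the hypotheses it is enough to prove that $T_j$ alone is frequently hypercyclic. The plan is to verify, for $T_j$ on the separable Fréchet space $H(\mathbb C^d)$, the Frequent Hypercyclicity Criterion of Bayart and Grivaux (see \cite{21}, and \cite{13}, \cite{20}): I shall produce a dense linear subspace $X_0\subseteq H(\mathbb C^d)$ with $T_j(X_0)\subseteq X_0$ and a linear map $S_j\colon X_0\to X_0$ with $T_jS_j=I$ on $X_0$, such that for every $x\in X_0$ the series $\sum_{m\ge0}T_j^m x$ and $\sum_{m\ge0}S_j^m x$ converge unconditionally. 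The subspace $X_0$ will be the linear span of the system of partial derivatives of a single, suitably chosen, nonzero $\varphi\in\bigcap_{k=1}^d\ker T_k$, and its density is precisely what the completeness theorem of this paper is designed to give.

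From \eqref{f1} an immediate induction yields $[T_j,\partial_j^m]=m\,a_j\,\partial_j^{m-1}$ for all $m\ge1$, whereas $T_j$ commutes with $\partial_k$ when $k\ne j$; hence
\begin{equation*}
  T_j D^n=D^n T_j+n_j\,a_j\,D^{n-e_j}\qquad(n\in\mathbb N_0^d),
\end{equation*}
where $e_j\in\mathbb N_0^d$ is the $j$-th unit multi-index and $D^{n-e_j}:=0$ if $n_j=0$. Pick $\varphi\in\bigcap_k\ker T_k$, $\varphi\ne0$, as allowed by hypothesis (1), and set $g_n:=D^n\varphi$ and $X_0:=\operatorname{span}\{g_n:n\in\mathbb N_0^d\}$. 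Since $T_k\varphi=0$, the identity above gives $T_jg_n=n_j\,a_j\,g_{n-e_j}$, so $T_j$ acts on $X_0$ as a weighted backward shift in the $j$-th index; in particular $T_j^m g_n=0$ for $m>n_j$, so each series $\sum_{m\ge0}T_j^m x$ with $x\in X_0$ is a finite sum and its unconditional convergence is trivial. Define $S_j$ on $X_0$ by $S_jg_n:=\dfrac1{a_j(n_j+1)}\,g_{n+e_j}$; this is well defined because $\varphi$, being in $\ker T_j$, satisfies no nonzero constant-coefficient linear partial differential equation (justified in the last paragraph), so the $g_n$ are linearly independent. Clearly $S_j(X_0)\subseteq X_0$ and $T_jS_j=I$ on $X_0$.

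It remains to control $\sum_{m\ge0}S_j^m x$; by linearity we may take $x=g_n$, and then $S_j^m g_n=\dfrac{n_j!}{a_j^m\,(n_j+m)!}\,g_{n+me_j}=\dfrac{n_j!}{a_j^m\,(n_j+m)!}\,D^n(\partial_j^m\varphi)$. As $D^n$ is continuous on $H(\mathbb C^d)$, it suffices to show that $\sum_{m\ge0}\dfrac1{(n_j+m)!\,|a_j|^m}\,\partial_j^m\varphi$ converges absolutely in every seminorm $\|\cdot\|_K$, $K\subset\mathbb C^d$ compact. This follows from the Cauchy inequalities as soon as $\varphi$ has finite order: if $|\varphi(z)|\le Ce^{A|z|^\rho}$, optimizing the radius in the Cauchy estimate for $\partial_j^m$ gives $\|\partial_j^m\varphi\|_K\le C_K\,\gamma^m\,m^{m(1-1/\rho)}$ for suitable constants $C_K,\gamma$, and since $(n_j+m)!$ grows essentially like $m^m$ this is $o\bigl((n_j+m)!\,|a_j|^m\bigr)$, whence absolute — so unconditional — convergence. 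Granting the density of $X_0$ and the finite order of $\varphi$, the Frequent Hypercyclicity Criterion applies, $T_j$ is frequently hypercyclic, and, $j$ being arbitrary, so is every $T_j$.

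The one substantial point left — and the one I expect to require real work — is to choose $\varphi\in\bigcap_k\ker T_k$, $\varphi\ne0$, of finite order and with $\{D^n\varphi\}$ complete in $H(\mathbb C^d)$, so that the completeness theorem of this paper becomes applicable. The structural observation to use is that, because $[M_{z_j},\partial_j]=-I$ and the multiplication operator $M_{z_j}$ (by $z_j$) commutes with $\partial_k$ for $k\ne j$, the operator $T_j+a_j M_{z_j}$ commutes with all of $\partial_1,\dots,\partial_d$; being continuous on $H(\mathbb C^d)$ it is therefore a convolution operator, so $T_j=-a_j M_{z_j}+C_j$ with $C_j$ possessing an entire symbol of exponential type. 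Hence $\varphi\in\ker T_j$ is equivalent to the functional equation $a_j z_j\,\varphi=C_j\varphi$; comparing $z_j$-degrees shows at once that no nonzero finite combination of functions $z^\alpha e^{\langle\lambda,z\rangle}$ can satisfy it, which is the linear-independence fact used above. Estimating the convolutions in the $d$ equations $a_kz_k\varphi=C_k\varphi$ should bound the growth of any joint solution (one expects order at most $2$), and excluding the degenerate possibility that some solution fails to depend genuinely on all variables should yield a $\varphi\in\bigcap_k\ker T_k$ meeting the hypotheses of the completeness theorem; completeness of $\{D^n\varphi\}$, hence density of $X_0$, then follows from that theorem. (Alternatively, with the density of $X_0$ in hand one can bypass the convergence estimate by noting, from the relation $T_j e^{c\partial_j}=e^{c\partial_j}(T_j+a_j c I)$ for scalars $c$, that every translate $\varphi(\cdot+c)$ is an eigenvector of $T_j$ with eigenvalue $a_j c_j$, and that letting $c_j$ run over the circle $|c_j|=1/|a_j|$ and the other coordinates over $\mathbb C$ produces a perfectly spanning family of unimodular eigenvectors, whence frequent hypercyclicity via the eigenvalue form of the criterion.)
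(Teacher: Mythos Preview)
Your overall strategy is exactly that of the paper: apply the Frequent Hypercyclicity Criterion with $X_0=\operatorname{span}\{D^n\varphi:n\in\mathbb N_0^d\}$ for a nonzero $\varphi\in\bigcap_k\ker T_k$, use Lemma~\ref{lem100} to see that $T_j$ acts as a weighted backward shift (so $\sum T_j^m x$ is finite), define the right inverse $S_j$ exactly as you do, and invoke Theorem~\ref{th2} for density of $X_0$.

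Where you diverge from the paper is in the estimate for $\sum_m S_j^m g_n$, and this is where you create an artificial difficulty. You reduce convergence to a growth bound on $\|\partial_j^m\varphi\|_K$ and then argue that one must first show $\varphi$ has finite order; you leave this as ``the one substantial point left'' and only sketch how it \emph{should} follow from the representation $T_j=-a_jM_{z_j}+C_j$. None of this is needed. The paper simply applies the Cauchy inequality on nested polydiscs: for $p_m(g)=\sup_{|z|\le m\varepsilon}|g(z)|$ with $\varepsilon>1/|a_j|$,
\[
p_m\Bigl(\tfrac{\partial^k}{\partial z_j^k}D^n\varphi\Bigr)\le \frac{k!}{(m\varepsilon)^k}\,p_{2m}(D^n\varphi),
\]
so that
\[
p_m(S_j^k g_n)\le \frac{n_j!\,k!}{|a_j|^k(n_j+k)!\,(m\varepsilon)^k}\,p_{2m}(D^n\varphi),
\]
and since $n_j!\,k!/(n_j+k)!\le1$ and $|a_j|m\varepsilon>1$, the series converges for \emph{every} entire $\varphi$, with no growth hypothesis whatsoever. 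Thus your detour through finite order, and the unfinished analysis of the equations $a_kz_k\varphi=C_k\varphi$, can be deleted entirely.

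Similarly, there is nothing to ``choose'': Theorem~\ref{th2} asserts completeness of $\{D^n\varphi\}$ for \emph{any} nonzero $\varphi\in\bigcap_k\ker T_k$, so density of $X_0$ is immediate once such a $\varphi$ exists, which is hypothesis~(1). Your remark on linear independence of the $g_n$ is a fair point the paper glosses over, but it follows quickly from Lemma~\ref{lem100}: given a nontrivial finite relation $\sum c_n D^n\varphi=0$, apply $T^{n^*}$ for a maximal multi-index $n^*$ in the support to obtain $c_{n^*}\varphi=0$, a contradiction. Your alternative eigenvector route is plausible but would require showing that translates $\varphi(\cdot+c)$ with $c$ restricted to the non-open set $\{|c_j|=1/|a_j|\}$ still span densely, which does not follow directly from the corollary to Theorem~\ref{th2}.
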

\begin{rem}
In Theorem \ref{th1} we consider constants $a_j\neq0$. Note that if $a_j=0$ in (\ref{f1}) for some $j$ then the operators $T_j$ commute with each partial differentiation operator. Recall that a continuous linear operator on $H(\mathbb{C}^d)$ that commutes with each partial differentiation operator is called a convolution operator. A theorem of Godefroy and Shapiro \cite{3} states that every convolution operator on $H(\mathbb{C}^d)$ that is not a scalar multiple of the identity is hypercyclic. In \cite{22} it was shown that these operators are even frequently hypercyclic.
\end{rem}
\begin{rem}
Theorem \ref{th1} is an extension of the author's result in \cite{23} that establishes hypercyclicity of operators satisfying the conditions of Theorem \ref{th1} for $d=1$.
\end{rem}
The paper is organized as follows. In Section \ref{sec2} we obtain results on completeness of systems translates and systems of partial derivatives of entire functions. In Section \ref{sec3} we prove Theorem \ref{th1} by application of the results of Section \ref{sec2}. In Section \ref{sec4} we provide some examples of operators that are frequently hypercyclic by Theorem \ref{th1}.
\section{Complete systems of entire functions}\label{sec2}
In this section we study completeness of some systems of entire functions in the space $H(\mathbb{C}^d)$. More precisely, we shall consider the systems of translates and the systems of all partial derivatives of an entire function. Recall that the system of elements of a topological vector space $X$ is said to be complete in $X$ if the linear span of the system is dense in $X$. For $\lambda\in\mathbb{C}^d$ and $f\in H(\mathbb{C}^d)$ let us denote $S_\lambda f(z)\equiv f(z+\lambda)$.
\par
In this section we shall need some facts from the theory of convolution operators on $H(\mathbb{C}^d)$. These facts can be found, for example, in \cite{24}. Recall that every convolution operator on $H(\mathbb{C}^d)$ is defined by some linear continuous functional on $H(\mathbb{C}^d)$. Denote by $H'(\mathbb{C}^d)$ the strong dual space of $H(\mathbb{C}^d)$. Let $F\in H'(\mathbb{C}^d)$. Then the corresponding convolution operator $M_F$ has the form:
$$M_F[f](\lambda)=(F,S_\lambda f),\;\lambda\in\mathbb{C}^d,\;f\in H(\mathbb{C}^d).$$
The convolution operator $M_F$ is a linear continuous operator from $H(\mathbb{C}^d)$ to $H(\mathbb{C}^d)$. The Laplace transform
\begin{equation}\label{eq99}
L:F\rightarrow\widehat{F}(\lambda)=(F,\exp(z\lambda)),	
\end{equation}
where $n=(n_1,\cdots,n_d)$, establishes a one-to-one correspondence (and moreover, topological isomorphism) between $H'(\mathbb{C}^d)$ and the space $P_{\mathbb{C}^d}$ of entire functions of exponential type on $\mathbb{C}^d$. The function $\widehat{F}$ is said to be a characteristic function for the operator $M_F$. Consider the Taylor expansion:
$$\widehat{F}(\lambda)=\sum_{\|n\|=0}^\infty\frac{b_n}{n!}\lambda^n.$$
Then $(F,f)=\sum_{\|n\|=0}^\infty a_nb_n$, where $f(z)=\sum_{\|n\|=0}^\infty a_nz^n$. Since
$$f(\xi+\lambda)=\sum_{\|n\|=0}^\infty\frac{D^nf(\lambda)}{n!}\xi^n,$$
then $M_F$ can be represented in the form:
\begin{equation}\label{eq100}
M_F[f](\lambda)=\sum_{\|n\|=0}^\infty\frac{b_nD^nf(\lambda)}{n!}.
\end{equation}
On the other hand, since
$$f(\xi+\lambda)=\sum_{\|n\|=0}^\infty\frac{D^nf(\xi)}{n!}\lambda^n,$$
then
\begin{equation}\label{eq300}
M_F[f](\lambda)=\sum_{\|n\|=0}^\infty\frac{(F,D^nf(\xi))}{n!}\lambda^n.	
\end{equation}
In the following theorem we establish necessary and sufficient conditions for systems of partial derivatives and systems of translates to be complete in $H(\mathbb{C}^d)$.
\begin{thm}\label{th100}
Let $f\in H(\mathbb{C}^d)$, $f\not\equiv0$. Then the following conditions are equivalent:
  \begin{enumerate}
		\item $f\not\in\mathrm{Ker}\,M_F$ for any $F\in H'(\mathbb{C}^d)\setminus\{0\}$;
		\item The system of translates $\{S_\lambda f,\;\lambda\in\Lambda\}$ is complete in $H(\mathbb{C}^d)$ for each open set $\Lambda\subset\mathbb{C}^d$;
		\item The system of partial derivatives $\{D^n f,n\in\mathbb{N}_{0}^d\}$ is complete in $H(\mathbb{C}^d)$.
	\end{enumerate}
\end{thm}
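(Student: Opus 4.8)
The plan is to translate each of the three completeness statements into a statement about annihilating functionals via the Hahn--Banach theorem, and then to exploit the two series representations of $M_F$ recorded above --- in particular (\ref{eq300}) --- together with the identity principle for entire functions of several variables. Throughout I would use the standard fact that, $H(\mathbb{C}^d)$ being a locally convex space, a subset $\mathcal S\subset H(\mathbb{C}^d)$ is complete if and only if the only $F\in H'(\mathbb{C}^d)$ with $(F,g)=0$ for every $g\in\mathcal S$ is $F=0$; equivalently, the linear span of $\mathcal S$ is dense precisely when its annihilator in $H'(\mathbb{C}^d)$ is trivial.

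First I would establish $(1)\Leftrightarrow(3)$. Fix $F\in H'(\mathbb{C}^d)$. By (\ref{eq300}), the Taylor coefficient of $\lambda^n$ in the entire function $M_F[f]$ equals $(F,D^nf)/n!$; hence $(F,D^nf)=0$ for every $n\in\mathbb{N}_0^d$ if and only if $M_F[f]\equiv0$, i.e.\ $f\in\mathrm{Ker}\,M_F$. Consequently the annihilator of the system $\{D^nf:n\in\mathbb{N}_0^d\}$ in $H'(\mathbb{C}^d)$ is exactly $\{F\in H'(\mathbb{C}^d):f\in\mathrm{Ker}\,M_F\}$, and by the Hahn--Banach characterization this system is complete if and only if that set reduces to $\{0\}$, which is precisely condition $(1)$.

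Next I would establish $(1)\Leftrightarrow(2)$. For $F\in H'(\mathbb{C}^d)$ and $\lambda\in\mathbb{C}^d$ we have, by the very definition of $M_F$, that $(F,S_\lambda f)=M_F[f](\lambda)$, and $M_F[f]$ is an entire function of $\lambda$. Thus, for an open set $\Lambda\subset\mathbb{C}^d$, a functional $F$ annihilates $\{S_\lambda f:\lambda\in\Lambda\}$ if and only if $M_F[f]$ vanishes on $\Lambda$; by the identity principle for entire functions of several variables this forces $M_F[f]\equiv0$ on all of $\mathbb{C}^d$, i.e.\ $f\in\mathrm{Ker}\,M_F$. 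Hence, for \emph{every} open $\Lambda$, the annihilator of $\{S_\lambda f:\lambda\in\Lambda\}$ equals $\{F\in H'(\mathbb{C}^d):f\in\mathrm{Ker}\,M_F\}$, and Hahn--Banach again gives completeness over each open $\Lambda$ iff this set is $\{0\}$, i.e.\ iff $(1)$ holds. In particular, completeness of the translates over a single open set already yields it over all open sets.

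The argument is in essence a bookkeeping exercise once the representations (\ref{eq100})--(\ref{eq300}) and the Laplace-transform description of $H'(\mathbb{C}^d)$ are in hand. The only point requiring genuine care is the passage, within $(1)\Leftrightarrow(2)$, from ``$M_F[f]=0$ on an open set'' to ``$M_F[f]\equiv0$'': this is exactly where the holomorphy of $\lambda\mapsto M_F[f](\lambda)$ and the several-variable identity theorem enter, and it is also what makes condition $(2)$ insensitive to the choice of the open set $\Lambda$.
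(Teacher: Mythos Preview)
Your proof is correct and follows essentially the same approach as the paper: both arguments identify, via Hahn--Banach (the paper calls it the Approximation Principle), the annihilator of each system with $\{F:f\in\mathrm{Ker}\,M_F\}$, using the definition $M_F[f](\lambda)=(F,S_\lambda f)$ together with the identity principle for the translates, and the Taylor expansion (\ref{eq300}) for the derivatives. The only difference is organizational---you prove two biconditionals $(1)\Leftrightarrow(2)$ and $(1)\Leftrightarrow(3)$, whereas the paper runs the cycle $1\Rightarrow 2\Rightarrow 3\Rightarrow 1$ by contrapositive---but the substance is identical.
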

\begin{proof}
$1\Rightarrow 2$. Assume that the system $\{S_\lambda f,\;\lambda\in\Lambda\}$ is not complete in $H(\mathbb{C}^d)$ for some open set $\Lambda\subset\mathbb{C}^d$. Then according to the Approximation Principle (see e.g. \cite[Ch.2]{27}) there is a non-zero continuous linear functional $F$ on $H(\mathbb{C}^d)$ such that $(F, S_\lambda f)=0,\,\forall\lambda\in\Lambda$. Since $\Lambda$ is an open set, then $M_F[f](\lambda)=(F, S_\lambda f)=0,\,\forall\lambda\in\mathbb{C}^d$. Hence, $f\in\mathrm{Ker}\,M_F$.
\par
$2\Rightarrow 3$. Assume that the system $\{D^n f,n\in\mathbb{N}_{0}^d\}$ is not complete in $H(\mathbb{C}^d)$. Then there is $0\neq F\in H'(\mathbb{C}^d)$ such that $(F,D^n f)=0$, $\forall n\in\mathbb{N}_{0}^d$. Let $\Lambda\subset\mathbb{C}^d$ be some open set. Then 
$$(F,S_\lambda f)=M_F[f](\lambda)=\sum_{\|n\|=0}^\infty\frac{(F,D^nf)}{n!}\lambda^n=0,\;\forall\lambda\in\Lambda.$$
Hence, the system $\{S_\lambda f,\;\lambda\in\Lambda\}$ is not complete in $H(\mathbb{C}^d)$.
\par
$3\Rightarrow 1$. Assume that $f\in\mathrm{Ker}\,M_F$ for some $F\in H'(\mathbb{C}^d)\setminus\{0\}$. Then by (\ref{eq300})
$$M_F[f](\lambda)=\sum_{\|n\|=0}^\infty\frac{(F,D^nf)}{n!}\lambda^n=0,\,\forall\lambda\in\mathbb{C}^d.$$
Hence, $(F,D^nf)=0$, $\forall n\in\mathbb{N}_{0}^d$. Thus, the system $\{D^n f,n\in\mathbb{N}_{0}^d\}$ is not complete in $H(\mathbb{C}^d)$.	
\end{proof}
We need the following:
\begin{lem}\label{lem100}
Let the operators $T_j$, $j=1,\cdots,d$ satisfy the conditions of the Theorem \ref{th1}. Let $f\in \bigcap_{j=1}^d\mathrm{ker}\,T_j$, $f\not\equiv0$. Then, for all $k=(k_1,\dots,k_d),n=(n_1,\dots,n_d)\in\mathbb{N}_{0}^d$, 
\begin{equation}\label{eq102}
T^kD^nf=\begin{cases}
a^k\frac{n!}{(n-k!)}D^{n-k}f, & \text{if $k_j\leq n_j,\,j=1,\dots,d$},\\
0, & \text{otherwise},
\end{cases}
\end{equation}
where $T^k=T_1^{k_1}T_2^{k_2}\dots T_d^{k_d}$, $a^k=a_1^{k_1}a_2^{k_2}\cdots a_d^{k_d}$.
\end{lem}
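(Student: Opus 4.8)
The plan is to prove the identity (\ref{eq102}) by induction on $\|k\|$, peeling off one factor $T_j$ from $T^k$ at a time. Throughout, write $\partial_j=\partial/\partial z_j$ and let $e_j\in\mathbb N_0^d$ denote the $j$-th standard unit multi-index. The relations (\ref{f1}) supply two facts, which are the only properties of the $T_j$ the argument uses: first, $T_j$ commutes with $\partial_l$ for every $l\neq j$; second, $[T_j,\partial_j]=a_jI$, and hence, by a one-line induction on $m$ based on $[T_j,\partial_j^{\,m}]=[T_j,\partial_j]\partial_j^{\,m-1}+\partial_j[T_j,\partial_j^{\,m-1}]$,
\begin{equation*}
[T_j,\partial_j^{\,m}]=m\,a_j\,\partial_j^{\,m-1},\qquad m\geq1.
\end{equation*}
Note that the $T_j$ are not assumed to commute with one another; this is harmless, since the ordered product $T^k=T_1^{k_1}\cdots T_d^{k_d}$ is treated without ever transposing two $T$'s. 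Morally, $(T_j,\partial_j)$ behaves as an annihilation--creation pair acting on the ``vacuum'' $f\in\bigcap_j\ker T_j$, and (\ref{eq102}) is the familiar multidimensional oscillator identity; the induction below is a careful version of that computation.

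For the induction, the base case $\|k\|=0$ is the tautology $D^nf=D^nf$ (with $n!/(n-k)!=1$). For the inductive step, assume $\|k\|\geq1$ and let $j$ be the largest index with $k_j>0$, so that $T^k=T^{k-e_j}T_j$ and $T^kD^nf=T^{k-e_j}\bigl(T_jD^nf\bigr)$. Put $R_j=\prod_{l\neq j}\partial_l^{\,n_l}$; since $T_j$ commutes with $R_j$ and $f\in\ker T_j$,
\begin{equation*}
T_jD^nf=R_j\,T_j\partial_j^{\,n_j}f=R_j\bigl([T_j,\partial_j^{\,n_j}]f+\partial_j^{\,n_j}T_jf\bigr)=R_j[T_j,\partial_j^{\,n_j}]f.
\end{equation*}
If $n_j=0$ this is $0$, so $T^kD^nf=0$; and since then $k_j>0=n_j$ we are in the ``otherwise'' branch of (\ref{eq102}), so this is the required value. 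If $n_j\geq1$, then $[T_j,\partial_j^{\,n_j}]f=n_ja_j\partial_j^{\,n_j-1}f$, whence $T_jD^nf=n_ja_j\,R_j\partial_j^{\,n_j-1}f=n_ja_j\,D^{n-e_j}f$ and therefore
\begin{equation*}
T^kD^nf=n_ja_j\,T^{k-e_j}D^{n-e_j}f.
\end{equation*}

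Now apply the induction hypothesis to $T^{k-e_j}D^{n-e_j}f$, which is legitimate because $\|k-e_j\|=\|k\|-1$. Using $k_j,n_j\geq1$, one checks that the condition ``$(k-e_j)_l\leq(n-e_j)_l$ for all $l$'' is equivalent to ``$k_l\leq n_l$ for all $l$'', and that $(n-e_j)-(k-e_j)=n-k$. In the exceptional case the hypothesis gives $T^{k-e_j}D^{n-e_j}f=0$, hence $T^kD^nf=0$, as claimed. Otherwise it gives $a^{\,k-e_j}\frac{(n-e_j)!}{(n-k)!}D^{n-k}f$; multiplying by $n_ja_j$ and using $a_j\,a^{\,k-e_j}=a^k$ together with $n_j\,(n-e_j)!=n!$ yields
\begin{equation*}
T^kD^nf=a^k\,\frac{n!}{(n-k)!}\,D^{n-k}f,
\end{equation*}
which is (\ref{eq102}) (the denominator in (\ref{eq102}) being understood as $(n-k)!$). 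This closes the induction.

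I do not expect a genuine obstacle: the argument is bookkeeping rather than analysis. The one point needing care is ensuring the recursion simultaneously reproduces the constant $a^k\,n!/(n-k)!$ and forces the collapse to $0$ whenever $k_l>n_l$ for some $l$. Casting everything as the single induction on $\|k\|$ above makes the collapse automatic: a violated inequality $k_l>n_l$ is either already visible after peeling one factor (if that factor is $T_l$ with $n_l=0$), or is preserved by each peeling step until the $l$-th coordinate of the remaining multi-index is brought down to $0$, at which stage the appearance of $T_lf=0$ annihilates the expression. The only real computation is then the constant-chasing indicated above, which I have stated but not belabored.
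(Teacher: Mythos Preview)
Your proof is correct and follows essentially the same approach as the paper: both arguments rest on the iterated commutator identity $[T_j,\partial_j^{\,m}]=m\,a_j\,\partial_j^{\,m-1}$, the fact that $T_j$ commutes with $\partial_l$ for $l\neq j$, and the hypothesis $f\in\ker T_j$. The only organizational difference is that the paper peels off a full block $T_j^{k_j}$ for each coordinate in one step (obtaining (\ref{eq115})--(\ref{eq116}) directly), whereas you run a single induction on $\|k\|$ removing one factor $T_j$ at a time; this is a presentational variant, not a different idea.
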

\begin{proof}
Let us fix some arbitrary $j\in\mathbb{N}:1\leq j\leq d$ and $n\in\mathbb{N}_{0}^d$. From (\ref{f1}) it follows (see e.g. \cite[\S 16]{26}) that
\begin{equation}\label{eq114}
\Bigl[T_j,\frac{\mathrm{\partial}^{n_j}}{\mathrm{\partial}z_j^{n_j}}\Bigr]=a_jn_j\frac{\mathrm{\partial}^{n_j-1}}{\mathrm{\partial}z_j^{n_j-1}}.
\end{equation}
Let us put 
$$\check{f}(z)=\frac{\partial^{\|n\|-n_j}f(z)}{\partial z_1^{n_1}\cdots\partial z_{j-1}^{n_{j-1}}\partial z_{j+1}^{n_{j+1}}\cdots\partial z_d^{n_d}}.$$
Then $D^nf(z)\equiv\frac{\mathrm{\partial}^{n_j}}{\mathrm{\partial}z_j^{n_j}}\check{f}(z)$. Since $f\in\mathrm{Ker}\,T_j$, then it follows from (\ref{f1}) that $\check{f}\in\mathrm{Ker}\,T_j$. Hence, (\ref{eq114}) implies that 
\begin{equation}\label{eq115}
T_j^{k_j}\frac{\mathrm{\partial}^{n_j}}{\mathrm{\partial}z_j^{n_j}}\check{f}=a_j^{k_j}\frac{n_j!}{(n_j-k_j)!}\frac{\mathrm{\partial}^{n_j-k_j}}{\mathrm{\partial}z_j^{n_j-k_j}}\check{f},
\end{equation}
if $k_j\leq n_j$, and
\begin{equation}\label{eq116}
T_j^{k_j}\frac{\mathrm{\partial}^{n_j}}{\mathrm{\partial}z_j^{n_j}}\check{f}=0,
\end{equation}
if $k_j>n_j$. Since (\ref{eq115}), (\ref{eq116}) hold for an arbitrary $j\in\mathbb{N}:1\leq j\leq d$, the lemma then follows.
\end{proof}
Now we are ready to state and prove the main result of this section.
\begin{thm}\label{th2}
Let the operators $T_j$, $j=1,\cdots,d$ satisfy the conditions of the Theorem \ref{th1}. Let $f\in \bigcap_{j=1}^d\mathrm{ker}\,T_j$, $f\not\equiv0$. Then the system $\{D^n f,n\in\mathbb{N}_{0}^d\}$ is complete in $H(\mathbb{C}^d)$.
\end{thm}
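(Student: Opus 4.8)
The plan is to deduce Theorem \ref{th2} from Theorem \ref{th100} and Lemma \ref{lem100}. By the equivalence $1\Leftrightarrow 3$ of Theorem \ref{th100} it suffices to verify condition $1$ there, namely that $f\notin\mathrm{Ker}\,M_F$ for every $F\in H'(\mathbb{C}^d)\setminus\{0\}$. I would argue by contradiction: suppose $M_Ff=0$ for some $F\neq 0$. Then $\widehat F\not\equiv 0$, hence $M_F\neq 0$, and $\mathrm{Ker}\,M_F$ is a proper, closed, translation-invariant subspace of $H(\mathbb{C}^d)$.

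The first step is to show that every polynomial multiple of every translate of $f$ still lies in $\mathrm{Ker}\,M_F$. Writing $\widehat F(\lambda)=\sum_{\|n\|=0}^\infty\frac{b_n}{n!}\lambda^n$, so that $M_F=\sum_n\frac{b_n}{n!}D^n$ by (\ref{eq100}), and applying the continuous operator $T^l$ to $M_Ff=0$, Lemma \ref{lem100} collapses each $T^lD^nf$ to a scalar multiple of $D^{n-l}f$ and thereby reorganizes the series into $a^lM_{F_l}f$, where $F_l\in H'(\mathbb{C}^d)$ is the functional with Laplace transform $D^l\widehat F$. Since $a^l\neq 0$, this gives $M_{F_l}f=0$ for every $l\in\mathbb{N}_0^d$, and since convolution operators commute with translations, also $M_{F_l}(S_\mu f)=0$ for all $l\in\mathbb{N}_0^d$, $\mu\in\mathbb{C}^d$. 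Combining this with the elementary commutator identity $M_FM_{z_j}=M_{z_j}M_F+M_{F_{e_j}}$ (so that $M_FM_{z^l}=\sum_{l'\le l}\binom{l}{l'}M_{z^{l-l'}}M_{F_{l'}}$) one obtains $M_F(z^lS_\mu f)=\sum_{l'\le l}\binom{l}{l'}z^{l-l'}M_{F_{l'}}(S_\mu f)=0$. This part is routine bookkeeping. Consequently the linear span $\mathcal V:=\mathrm{span}\{z^lS_\mu f:\ l\in\mathbb{N}_0^d,\ \mu\in\mathbb{C}^d\}$, and hence its closure $\overline{\mathcal V}$, is contained in $\mathrm{Ker}\,M_F$.

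Now $\overline{\mathcal V}$ is a closed subspace that is nonzero (it contains $f$), invariant under all translations $S_\mu$, and invariant under multiplication by each $z_j$. The heart of the argument is that this forces $\overline{\mathcal V}=H(\mathbb{C}^d)$. For this I would invoke the classical fact from the theory of convolution equations that every nonzero closed translation-invariant subspace of $H(\mathbb{C}^d)$ contains an exponential $e^{\langle w_0,z\rangle}$ for some $w_0\in\mathbb{C}^d$ (equivalently, a proper closed ideal of the algebra of Laplace transforms has a common zero; in one variable this is Schwartz's theorem, in several variables it belongs to the circle of results around the Ehrenpreis fundamental principle, see e.g. \cite{24}). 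Applying it to $W:=\overline{\mathrm{span}}\{S_\mu f:\ \mu\in\mathbb{C}^d\}\subseteq\overline{\mathcal V}$ yields $w_0$ with $e^{\langle w_0,z\rangle}\in\overline{\mathcal V}$; invariance under multiplication by $z_j$ then gives $z^le^{\langle w_0,z\rangle}\in\overline{\mathcal V}$ for all $l\in\mathbb{N}_0^d$; and since the polynomials are dense in $H(\mathbb{C}^d)$ and $e^{-\langle w_0,z\rangle}\in H(\mathbb{C}^d)$, the closed linear span of $\{z^le^{\langle w_0,z\rangle}:\ l\in\mathbb{N}_0^d\}$ is all of $H(\mathbb{C}^d)$. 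Thus $H(\mathbb{C}^d)=\overline{\mathcal V}\subseteq\mathrm{Ker}\,M_F$, contradicting $M_F\neq 0$.

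Hence $f\notin\mathrm{Ker}\,M_F$ for every $F\in H'(\mathbb{C}^d)\setminus\{0\}$, and Theorem \ref{th100} yields that $\{D^nf:\ n\in\mathbb{N}_0^d\}$ is complete in $H(\mathbb{C}^d)$. The only non-formal ingredient is the cited structural fact about closed translation-invariant subspaces of $H(\mathbb{C}^d)$ (or, what is the same, the triviality of the common kernel of a family of convolution operators whose characteristic functions have no common zero — here the functions $D^l\widehat F$, whose only possible common zero would be a zero of $\widehat F$ of infinite order); everything else reduces to Lemma \ref{lem100} together with the standard commutation relations of convolution operators with translations and coordinate multiplications, so that is the step I expect to be the real obstacle and the point where the literature on convolution operators must be used.
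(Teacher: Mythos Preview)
The proposal has a genuine gap at precisely the point you flag as ``the real obstacle''. The ``classical fact'' you invoke --- that every nonzero closed translation-invariant subspace of $H(\mathbb{C}^d)$ contains an exponential $e^{\langle w_0,z\rangle}$, or equivalently that a proper closed ideal of Laplace transforms must have a common zero --- is \emph{false} for $d>1$. This is exactly Gurevich's counterexample \cite{28}, which the paper itself discusses in the remark following Theorem~\ref{th2}: there exist nonzero closed translation-invariant subspaces of $H(\mathbb{C}^d)$ containing no exponential-polynomial elements. Dually, there are families of characteristic functions with no common zero whose convolution operators nevertheless have nontrivial common kernel. Hence from $M_{F_l}f=0$ for all $l$ (with the $D^l\widehat F$ having no common zero) you cannot conclude $f=0$, and your extraction of an exponential from $W=\overline{\mathrm{span}}\{S_\mu f\}$ is unjustified.

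The paper's proof begins exactly as yours does --- applying $T^k$ to $M_Ff=0$ and using Lemma~\ref{lem100} to obtain the relations you call $M_{F_l}f=0$ --- but then, instead of staying in $H(\mathbb{C}^d)$, it performs a symmetric interchange of Taylor summations to recast these as a system of infinite-order differential equations in $P_{\mathbb{C}^d}$ satisfied by $\widehat F$, with characteristic functions $D^m f$. The point is that translation-invariant subspaces of $P_{\mathbb{C}^d}$ \emph{do} admit spectral synthesis (Filippov \cite{25}); this yields an exponential $e^{\langle\mu,\cdot\rangle}$ in the solution space, and substituting it gives $D^m f(\mu)=0$ for all $m$, contradicting $f\not\equiv 0$. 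Your idea of exploiting invariance under multiplication by $z_j$ could be salvaged along similar lines --- the annihilator of your $\overline{\mathcal V}$, transported to $P_{\mathbb{C}^d}$, becomes differentiation-invariant and hence (after a closure argument) translation-invariant, so Filippov would apply there --- but as written the argument rests on a step that fails.
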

\begin{proof}
Assume that there exists $F\in H'(\mathbb{C}^d)\setminus\{0\}$ such that $f\in\mathrm{Ker}\,M_F$. Then $T^kM_F[f]=0$, $\forall k\in\mathbb{N}_{0}^d$. Then by (\ref{eq99}), (\ref{eq100}) we have
\begin{equation}\label{eq101}
T^kM_F[f](z)=\sum_{\|n\|=0}^\infty\frac{D^n\widehat{F}(0)}{n!}T^kD^nf(z)\equiv0,\;\forall k\in\mathbb{N}_{0}^d.
\end{equation}
From (\ref{eq102}), (\ref{eq101}) we obtain
\begin{equation*}
\sum_{n_1=k_1,\dots,n_d=k_d}^\infty\frac{D^n\widehat{F}(0)}{(n-k)!}D^{n-k}f(z)\equiv0,\;\forall k\in\mathbb{N}_{0}^d,
\end{equation*}
or
\begin{equation}\label{eq104}
\sum_{\|n\|=0}^\infty\frac{D^{n+k}\widehat{F}(0)}{n!}D^nf(z)\equiv0,\;\forall k\in\mathbb{N}_{0}^d.
\end{equation}
Next, using the Taylor series expansion of $D^nf(z)$ we obtain
\begin{equation}\label{eq105}
\sum_{\|n\|=0}^\infty\frac{D^{n+k}\widehat{F}(0)}{n!}\sum_{\|m\|=0}^\infty\frac{D^{n+m}f(0)}{m!}z^m\equiv0,\;\forall k\in\mathbb{N}_{0}^d.
\end{equation}
Since the series in (\ref{eq105}) converges absolutely, then we can interchange the order of summation:
\begin{equation}\label{eq106}
\sum_{\|m\|=0}^\infty\frac{z^m}{m!}\sum_{\|n\|=0}^\infty\frac{D^{n+m}f(0)D^{n+k}\widehat{F}(0)}{n!}\equiv0,\;\forall k\in\mathbb{N}_{0}^d.
\end{equation}
From (\ref{eq106}) it follows that
\begin{equation*}
\sum_{\|n\|=0}^\infty\frac{D^{n+m}f(0)D^{n+k}\widehat{F}(0)}{n!}=0,\;\forall m,k\in\mathbb{N}_{0}^d.
\end{equation*}
Hence,
\begin{equation*}
\frac{\lambda^k}{k!}\sum_{\|n\|=0}^\infty\frac{D^{n+m}f(0)D^{n+k}\widehat{F}(0)}{n!}=0,\;\forall m,k\in\mathbb{N}_{0}^d,\,\forall\lambda\in\mathbb{C}^d,
\end{equation*}
and
\begin{equation}\label{eq109}
\sum_{\|k\|=0}^\infty\frac{\lambda^k}{k!}\sum_{\|n\|=0}^\infty\frac{D^{n+m}f(0)D^{n+k}\widehat{F}(0)}{n!}=0,\;\forall m\in\mathbb{N}_{0}^d,\,\forall\lambda\in\mathbb{C}^d.
\end{equation}
Since the series in (\ref{eq109}) converges absolutely, then we can interchange the order of summation:
\begin{equation}\label{eq110}
\sum_{\|n\|=0}^\infty\frac{D^{n+m}f(0)}{n!}\sum_{\|k\|=0}^\infty\frac{D^{n+k}\widehat{F}(0)}{k!}\lambda^k=0,\;\forall m\in\mathbb{N}_{0}^d,\,\forall\lambda\in\mathbb{C}^d.
\end{equation}
Finally, we can rewrite (\ref{eq110}) in the following form:
\begin{equation}\label{eq111}
\sum_{\|n\|=0}^\infty\frac{D^{n+m}f(0)}{n!}D^n\widehat{F}(\lambda)\equiv0,\;\forall m\in\mathbb{N}_{0}^d.
\end{equation}
Consider the infinite system of infinite-order homogeneous linear partial differential equations with constant coefficients
\begin{equation}\label{eq112}
\sum_{\|n\|=0}^\infty\frac{D^{n+m}f(0)}{n!}D^ng(\lambda)=0,\;g\in P_{\mathbb{C}^d},\;m\in\mathbb{N}_{0}^d.
\end{equation}
with characteristic functions
$$D^mf(z)=\sum_{\|n\|=0}^\infty\frac{D^{n+m}f(0)}{n!}z^n,\;m\in\mathbb{N}_{0}^d.$$
Note that the series in (\ref{eq112}) converges absolutely and uniformly for any function $g\in P_{\mathbb{C}^d}$. For each of the equations in the system (\ref{eq112}) denote by $W_m$ the space of solutions, $m\in\mathbb{N}_{0}^d$. Denote by $W$ the space of solutions of the system (\ref{eq112}). Then, obviously, $W=\bigcap W_m$. It is easy to see that $W$ is a translation-invariant subspace of $P_{\mathbb{C}^d}$ (i.e. if $g\in W$, then $S_ug\in W$, $\forall u\in\mathbb{C}^d$). 
\par
In \cite{25} it was shown that every translation-invariant subspace of $P_{\mathbb{C}^d}$ admits spectral synthesis. In particular, this means that  a translation-invariant subspace $V\subset P_{\mathbb{C}^d}$ is not trivial (i.e. $V\neq\{0\}$) if and only if it contains some exponential element of the form 
\begin{equation}\label{eq113}
\varphi_\mu(z)=P(z)\exp\left\langle \mu,z\right\rangle,\;\mu\in\mathbb{C}^d,
\end{equation}
where $P(z)\in H(\mathbb{C}^d)$ is some polynomial. From (\ref{eq111}) it follows that the function $\widehat{F}\in P_{\mathbb{C}^d}$ satisfy the system (\ref{eq112}). Since $\widehat{F}\in W$, then our initial assumption that $\widehat{F}\not\equiv0$ implies that $W\neq\{0\}$, and hence $W$ contains a function $\varphi_\mu$ of the form (\ref{eq113}). Therefore, $\varphi_\mu\in W_m$, $\forall m\in\mathbb{N}_{0}^d$. 
\par
Let us fix some arbitrary $r\in\mathbb{N}_{0}^d$. We are going to show that $\exp\left\langle \mu,z\right\rangle\in W_r$. Obviously, there exists some $q\in\mathbb{N}_{0}^d$ such that $D^qP(z)\equiv\mathrm{const}$. Note that $W_r$ is invariant under each partial differentiation operator. Then, for example, $\frac{\partial}{\partial z_1}\varphi_\mu\in W_r$. Hence, $\frac{\partial}{\partial z_1}\varphi_\mu-\mu_1\varphi_\mu=\frac{\partial\,P}{\partial z_1}\exp\left\langle \mu,z\right\rangle\in W_r$. By continuing this process we can show that $D^q(P)\exp\left\langle \mu,z\right\rangle\in W_r$ or, equivalently, $\exp\left\langle \mu,z\right\rangle\in W_r$.
\par
Then
$$\sum_{\|n\|=0}^\infty\frac{D^{n+r}f(0)}{n!}D^n(\exp\left\langle \mu,z\right\rangle)=0,$$
or, equivalently,
$$\sum_{\|n\|=0}^\infty\frac{D^{n+r}f(0)}{n!}\mu^n=D^rf(\mu)=0.$$
\par
Hence, $D^mf(\mu)=0,\;\forall m\in\mathbb{N}_{0}^d$. But this is impossible since $f\not\equiv0$. Hence, $W=\{0\}$. But this contradicts our initial assumption that $\widehat{F}\not\equiv0$. Hence, $f\not\in\mathrm{Ker}\,M_\Phi$ for any $\Phi\in H'(\mathbb{C}^d)\setminus\{0\}$. The theorem then follows from Theorem \ref{th100}.
\end{proof}
\begin{cor}
Let operators $T_j$, $j=1,\cdots,d$ satisfy the conditions of the Theorem \ref{th1}. Let $f\in \bigcap_{j=1}^d\mathrm{ker}\,T_j$, $f\not\equiv0$. Then the system of translates $\{S_\lambda f,\;\lambda\in\Lambda\}$ is complete in $H(\mathbb{C}^d)$ for each open set $\Lambda\subset\mathbb{C}^d$.
\end{cor}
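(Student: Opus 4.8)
The plan is to derive the corollary immediately from Theorem~\ref{th2} together with the equivalence established in Theorem~\ref{th100}. By Theorem~\ref{th2}, under the stated hypotheses on $f$ and the operators $T_j$, the system $\{D^n f,\ n\in\mathbb{N}_0^d\}$ is complete in $H(\mathbb{C}^d)$; that is, condition~(3) of Theorem~\ref{th100} holds. Since $f\not\equiv 0$, Theorem~\ref{th100} applies and gives the equivalence of its three conditions, so in particular condition~(2) holds: the system of translates $\{S_\lambda f,\ \lambda\in\Lambda\}$ is complete in $H(\mathbb{C}^d)$ for every open set $\Lambda\subset\mathbb{C}^d$. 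This is precisely the assertion to be proved.

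If one prefers an argument that does not quote the full strength of Theorem~\ref{th100}, the relevant implication $(3)\Rightarrow(2)$ can be spelled out directly. Suppose, for some open $\Lambda\subset\mathbb{C}^d$, the translates $\{S_\lambda f,\ \lambda\in\Lambda\}$ were not complete. By the Approximation Principle there is a nonzero $F\in H'(\mathbb{C}^d)$ with $(F,S_\lambda f)=0$ for all $\lambda\in\Lambda$, i.e. $M_F[f]$ vanishes on the open set $\Lambda$; being entire, $M_F[f]\equiv 0$, so $f\in\mathrm{Ker}\,M_F$. Expanding via (\ref{eq300}) then forces $(F,D^n f)=0$ for all $n\in\mathbb{N}_0^d$, which contradicts the completeness of $\{D^n f,\ n\in\mathbb{N}_0^d\}$ furnished by Theorem~\ref{th2}. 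Hence no such $\Lambda$ exists.

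I do not expect any real obstacle at this stage: the substantive work is already contained in Lemma~\ref{lem100} and Theorem~\ref{th2} (whose core is the spectral synthesis theorem for translation-invariant subspaces of $P_{\mathbb{C}^d}$), and the corollary is a routine transfer of completeness from the system of partial derivatives to the systems of translates via the functional-analytic duality encapsulated in Theorem~\ref{th100}.
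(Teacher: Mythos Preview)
Your proof is correct and follows exactly the route the paper intends: the corollary is stated without proof immediately after Theorem~\ref{th2}, and the implicit argument is precisely the combination of Theorem~\ref{th2} with the equivalence in Theorem~\ref{th100} that you spell out. Your optional direct argument for $(3)\Rightarrow(2)$ is also fine and just unwinds the relevant pieces of the proof of Theorem~\ref{th100}.
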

\begin{rem}
The analogue of Theorem \ref{th2} for $d=1$ was previously proved in author's paper \cite{23} with the help of the result of L. Schwartz \cite{14} which says that every translation-invariant subspace of $H(\mathbb{C})$ admits spectral synthesis. The situation is more difficult if $d>1$ because there is a counter example on spectral synthesis in $H(\mathbb{C}^d)$, $d>1$, by D. Gurevich \cite{28}. In particular Gurevich proved that there exists translation-invariant subspace $U\neq\{0\}$ of $H(\mathbb{C}^d)$ that does not contain any exponential element. This is why we had to transform a system (\ref{eq104}) of equations in $H(\mathbb{C}^d)$ to a system (\ref{eq111}) in $P_{\mathbb{C}^d}$.
\end{rem}
\begin{rem}
In \cite{23} it was shown that the system $\{f^{(n)},\,n=0,1,\dots\}$ is complete in $H(\mathbb{C})$ if $f\in\mathrm{ker}\,T\setminus\{0\}$, where $T$ is a continuous linear operator on $H(\mathbb{C})$ such that $[T,\frac{\mathrm{d}}{\mathrm{d}\,z}]=aI$ for some $a\in\mathbb{C}\setminus\{0\}$. But in Theorem \ref{th2} the condition $f\in\mathrm{ker}\,T_j\setminus\{0\}$ would not be sufficient for completeness of the system $\{D^n f,n\in\mathbb{N}_{0}^d\}$. For example, consider the operator $T_1g(z)=\frac{\mathrm{\partial}g(z)}{\mathrm{\partial}z_1}-z_1g(z)$ on $H(\mathbb{C}^2)$. Then $T_1$ satisfies (\ref{f1}) and $f(z)=\exp(z_1^2/2)\in\mathrm{ker}\,T_1$. But $\frac{\mathrm{\partial}}{\mathrm{\partial}z_2}\exp(z_1^2/2)=0$ and by Theorem \ref{th100} the system $\{D^n f,n\in\mathbb{N}_{0}^2\}$ is not complete in $H(\mathbb{C}^2)$. This is why we need a stronger condition $f\in\bigcap_{j=1}^d\mathrm{ker}\,T_j$.
\end{rem}
\section{The proof of the main result}\label{sec3}
In this section we give a proof of Theorem \ref{th1}.
\begin{proof}
Let us fix some arbitrary $j:1\leq j\leq d$. We are going to show that the operator $T_j$ is frequently hypercyclic. The proof will follow by an application of the frequent hypercyclicity criterion \cite{21} (see also e.g. \cite[Ch. 9]{20}): it is enough to show that there is a dense subset $H_0$ of $H(\mathbb{C}^d)$ and a map $S_j:H_0\rightarrow H_0$ such that, for any $x\in H_0$,
\begin{description}
	\item[(i)] $\sum_{k=0}^\infty T_j^kx$ converges unconditionally;
	\item[(ii)] $\sum_{k=0}^\infty S_j^kx$ converges unconditionally;
	\item[(iii)] $T_jS_jx=x$.
\end{description}
Consider some function $f\in \bigcap_{j=1}^d\mathrm{ker}\,T_j$, $f\not\equiv0$. Let us put
$$H_0=\mathrm{span}\{D^n f,n\in\mathbb{N}_{0}^d\}.$$
Then by Theorem \ref{th2}, $H_0$ is a dense subset of $H(\mathbb{C}^d)$. From (\ref{eq116}) it follows that for any $x\in H_0$ there is $K\in\mathbb{N}$ such that $T_j^kx=0$, if $k\geq K$. Thus, condition (i) is fulfilled.
\par
Let us define a map $S_j$ in the following way:
\begin{equation*}
S_j[D^nf]=\frac{1}{a_j(n_j+1)}\frac{\mathrm{\partial}}{\mathrm{\partial}z_j}D^nf,\;n\in\mathbb{N}_{0}^d.
\end{equation*}
Therefore,
\begin{equation*}
S_j^k[D^nf]=\frac{n_j!}{a_j^k(n_j+k)!}\frac{\mathrm{\partial}^k}{\mathrm{\partial}z_j^k}D^nf,\;n\in\mathbb{N}_{0}^d,\;k\in\mathbb{N}_0.
\end{equation*}
Let us fix some $\varepsilon\in\mathbb{R}$ such that
\begin{equation}\label{eq202}
\varepsilon>\max_{1\leq s\leq d}\frac{1}{|a_s|}.	
\end{equation}
The topology on $H(\mathbb{C}^d)$ can be defined by a family of semi-norms 
$$p_m(g)=\sup_{z\in\Omega_m}|g(z)|,\;m\in\mathbb{N},$$
where $\Omega_m=\{z\in\mathbb{C}^d:|z|\leq m\varepsilon\}$. In order to prove (ii), it is enough to show that the series
\begin{equation}\label{eq201}
\sum_{k=0}^\infty p_m(S_j^kD^nf)
\end{equation}
converges for any $m\in\mathbb{N}$, $n\in\mathbb{N}_{0}^d$. 
\par
Using the Cauchy estimate for derivatives, we obtain for all $\;n\in\mathbb{N}_{0}^d,\;k\in\mathbb{N}_0,\;m\in\mathbb{N}$:
$$p_m(S_j^kD^nf)=\frac{n_j!}{a_j^k(n_j+k)!}p_m\Bigl(\frac{\mathrm{\partial}^k}{\mathrm{\partial}z_j^k}D^nf\Bigr)\leq\frac{n_j!k!p_{2m}(D^nf)}{|a_j|^k(n_j+k)!(m\varepsilon)^k}.$$
Then by (\ref{eq202}) we have
$$\lim_{k\rightarrow\infty}\sqrt[k]{p_m(S_j^kD^nf)}\leq\frac{1}{|a_j|m\varepsilon}<1,\;m\in\mathbb{N},\,n\in\mathbb{N}_{0}^d.$$
Therefore, the series (\ref{eq201}) converges for any $m\in\mathbb{N}$, $n\in\mathbb{N}_{0}^d$. This means that condition (ii) is fulfilled. Furthermore, $T_jS_jx=x$ for any $x\in H_0$ and thus, condition (iii) is fulfilled.
\end{proof}
\section{Examples}\label{sec4}
In this section we provide some examples of operators that are frequently hypercyclic by Theorem \ref{th1}.
\par
1) Let 
$$T_1g(z)=\frac{\mathrm{\partial}g(z)}{\mathrm{\partial}z_1}-z_1g(z),\;T_2g(z)=\frac{\mathrm{\partial}g(z)}{\mathrm{\partial}z_2}-z_2g(z),\;g\in H(\mathbb{C}^2).$$
Then $\bigcap_{j=1}^2\mathrm{ker}\,T_j$ contain, for instance, the function $\exp(z_1^2/2)\cdot\exp(z_2^2/2)$. Besides,
\begin{equation}\label{eq200}
[T_j,\frac{\mathrm{\partial}}{\mathrm{\partial}z_k}]=\delta_{jk}I;\,j,k=1,2.	
\end{equation}
Therefore, $T_1$ and $T_2$ are frequently hypercyclic by Theorem \ref{th1}.
\par
2) Let
$$T_1g(z)=\frac{\mathrm{\partial}^2g(z)}{\mathrm{\partial}z^2_1}-z_1g(z),\;T_2g(z)=\frac{\mathrm{\partial}^2g(z)}{\mathrm{\partial}z^2_2}-z_2g(z),\;g\in H(\mathbb{C}^2).$$
Then $\bigcap_{j=1}^2\mathrm{ker}\,T_j$ contain, for example, the function $\mathrm{Ai}(z_1)\cdot\mathrm{Ai}(z_2)$, where $\mathrm{Ai}$ is Airy function. Moreover, (\ref{eq200}) holds. Hence, $T_1$ and $T_2$ are frequently hypercyclic by Theorem \ref{th1}.

\bigskip
\textbf{Acknowledgement.} The author is grateful to the referee for helpful comments and suggestions.

\end{document}